\theoremstyle{plain}
\newtheorem{thm}{Theorem}[section]
\newtheorem{cor}[thm]{Corollary}
\newtheorem{definition}[thm]{Definition}
\newtheorem{prop}[thm]{Proposition}
\begin{document}

\title{Arndt and De Morgan Integer Compositions}

\author{Brian Hopkins}\address{Saint Peter's University, Jersey City NJ, USA}\email{bhopkins@saintpeters.edu}

\author{Aram Tangboonduangjit}\address{Mahidol University International College, Nakhonpathom, Thailand}\email{aram.tan@mahidol.edu}


\begin{abstract}
In 2013, Joerg Arndt recorded that the Fibonacci numbers count integer compositions where the first part is greater than the second, the third part is greater than the fourth, etc.  We provide a new combinatorial proof that verifies his observation using compositions with only odd parts as studied by De Morgan.  We generalize the descent condition to establish families of recurrence relations related to two types of compositions: those made of any odd part and certain even parts, and those made of any even part and certain odd parts.  These generalizations connect to compositions studied by Andrews and Viennot.  New tools used in the combinatorial proofs include two permutations of compositions and a statistic based on the signed pairwise difference between parts.
\end{abstract}

\keywords{integer compositions, Fibonacci numbers, linear recurrences, combinatorial proofs}

\subjclass{05A17, 11B37, 05A19}

\maketitle

\section{Introduction}
Joerg Arndt observed the following occurrence of the Fibonacci numbers counting a subset of integer compositions.  This appears as a comment in the On-Line Encyclopedia of Integer Sequences \cite[A000045]{o}.

An integer composition of a positive integer $n$ is an ordered collection of parts $(c_1, \ldots, c_t)$ such that $\sum c_i = n$.  When listing compositions with single-digit parts, we often use the condensed representation $c_1 \cdots c_t$, sometimes with exponents denoting repetition.  Let $C(n)$ be the set of all compositions of $n$.  We define Arndt's compositions in terms of pairwise descending parts.

\begin{definition}
Let $A(n) \subset C(n)$ be the compositions such that $c_{2i-1} > c_{2i}$ for each positive integer $i$.  If the number of parts is odd, then the final inequality is vacuously true.
\end{definition}

See Table \ref{t1} for examples.  Arndt recorded that $a(n) = |A(n)| = f_n$, the $n$th Fibonacci number defined by $f_0 = 0$, $f_1=1$, and $f_n = f_{n-1}+f_{n-2}$ for $n\ge 2$, but did not provide a proof (and, per personal communication in April 2022, does not recall how he made the connection).

\begin{table}[h]
\caption{Arndt's compositions and their counts for small values of $n$.} 
\centering
\begin{tabular}{c|l|c}
$n$ & $A(n)$ & $a(n)$ \\ \hline
1 & 1 & 1 \\
2 & 2 & 1 \\
3 & 3, 21 & 2 \\
4 & 4, 31, 211 & 3 \\
5 & 5, 41, 32, 311, 212 & 5 \\
6 & 6, 51, 42, 411, 321, 312, 213, 2121 & 8 \\
7 & 7, 61, 52, 511, 43, 421, 412, 322, 313, 3121, 214, 2131, 21211 & 13
\end{tabular}
\label{t1}
\end{table}

In recent work \cite{ht22}, the authors confirmed Arndt's observation and explored generalizing the pairwise difference condition to $c_{2i-1} > c_{2i} + k$ for positive integers $k$.  Here, we give another proof that $a(n) =  f_n$ which leads to results about generalizing the pairwise difference condition to $c_{2i-1} > c_{2i} + k$ for negative integers $k$.

\begin{definition}
Let $C_\text{odd}(n) \subset C(n)$ be the compositions with parts restricted to odd integers.
\end{definition}

In 1846, Augustus De Morgan established that $c_\text{odd}(n)= |C_\text{odd}(n)| = f_n$.  Since this work seems to be only recently rediscovered, we quote his explanation \cite[pp. 203--204]{d46}.

\begin{quotation}
Required the number of ways in which a number can be compounded of odd numbers, different orders counting as different ways. If $a$ be the number of ways in which $n$ can be so made, and $b$ the number of ways in which $n+1$ can be made, then $a+b$ must be the number of ways in which $n+2$ can be made; for every way of making 12 out of odd numbers is either a way of making 10 with the last number increased by 2, or a way of making 11 with a 1 annexed. Thus, $1+5+3+3$ gives 12, formed from $1+5+3+1$ giving 10. But $1+9+1+1$ is formed from $1+9+1$ giving 11. Consequently, the number of ways of forming 12 is the sum of the number of ways of forming 10 and of forming 11. Now, 1 can only be formed in 1 way, and 2 can only be formed in 1 way; hence 3 can only be formed in $1+1$ or 2 ways, 4 in only $1+2$ or 3 ways. If we take the series 1, 1, 2, 3, 5, 8, 13, 21, 34, 55, 89, \&c.\ in which each number is the sum of the two preceding, then the $n$th number of this set is the number of ways (orders counting) in which $n$ can be formed of odd numbers. Thus, 10 can be formed in 55 ways, 11 in 89 ways, \&c.
\end{quotation}

\section{Another proof of Arndt's observation}

We establish a bijection between Arndt's compositions $A(n)$ and the compositions with odd parts considered by De Morgan.

Our arguments are simplified if we assume that every composition has even length; we achieve this by adding a final part 0 if the normal length of the composition is odd.  The examples, however, will not include any terminal zeros.

\begin{thm} \label{adm}
$A(n) \cong C_\text{odd}(n)$.
\end{thm}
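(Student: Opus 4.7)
The plan is to mirror De Morgan's recursive decomposition on the Arndt side. De Morgan partitions $C_\text{odd}(n)$ according to whether the last part equals $1$ (in which case the composition arises from $C_\text{odd}(n-1)$ by appending $1$) or exceeds $1$ (in which case it arises from $C_\text{odd}(n-2)$ by adding $2$ to the last part). I would build a parallel partition of $A(n)$ by two natural operations on the last (padded) pair, and then transport the bijection across the two partitions by induction.

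With the even-length padding convention every $c\in A(n)$ ends in a pair $(a,b)$ with $a>b\geq 0$. Define $\alpha:A(n-1)\to A(n)$ by sending $c'$ whose last pair has the form $(a',0)$ to the composition with last pair $(a'+1,0)$, and sending $c'$ whose last pair $(a',b')$ has $b'\geq 1$ to the composition obtained by appending a new pair $(1,0)$. Define $\beta:A(n-2)\to A(n)$ by replacing the last pair $(a',b')$ of $c'$ by $(a'+1,b'+1)$. The first step is to check these are well defined: the new last pair is strictly descending with nonnegative entries, and $\alpha$ never creates an internal zero since the only time we append $(1,0)$ is when the preceding pair already has $b'\geq 1$.

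The key observation is that $\alpha$ and $\beta$ partition $A(n)$: the image of $\alpha$ is exactly the set of Arndt compositions whose last padded pair has $b=0$, while the image of $\beta$ is exactly those with $b\geq 1$. Inverting is a clean case analysis on the last pair of $c\in A(n)$: if it is $(1,0)$, strip it off; if it is $(a,0)$ with $a\geq 2$, decrement to $(a-1,0)$; if it is $(a,b)$ with $b\geq 1$, back it off to $(a-1,b-1)$. Each inverse lands in $A(n-1)$ or $A(n-2)$, and together they demonstrate the combinatorial recurrence $a(n)=a(n-1)+a(n-2)$ that matches De Morgan's.

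Finally, with the base cases $n=1,2$ handled by inspection, I would define $\phi:A(n)\to C_\text{odd}(n)$ by induction, transporting Arndt's operations to De Morgan's: if $c=\alpha(c')$ set $\phi(c)$ to be $\phi(c')$ with a $1$ appended, and if $c=\beta(c')$ set $\phi(c)$ to be $\phi(c')$ with its last part increased by $2$. The partition guarantees this is well defined, and bijectivity at level $n$ follows from the inductive hypothesis at levels $n-1$ and $n-2$. The main obstacle is really the partition step, specifically verifying that stripping a terminal $(1,0)$ always yields a valid Arndt composition of $n-1$; this is where the padding convention together with the fact that internal pairs have $b'\geq 1$ does the essential work.
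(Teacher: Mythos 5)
Your proof is correct, but it takes a genuinely different route from the paper. The paper constructs a single explicit, closed-form bijection: each pair $(c_{2i-1},c_{2i})$ is sent to the block $(1^{c_{2i-1}-c_{2i}-1},\,2c_{2i}+1)$, and the blocks are concatenated; the inverse reads off maximal runs $(1^a,2b+1)$ and returns $(a+b+1,b)$. You instead prove the Fibonacci recurrence directly on the Arndt side via the maps $\alpha$ and $\beta$ on the last padded pair, verify that their images partition $A(n)$ (into the odd-length and even-length compositions, respectively, which is what makes the partition clean), and then define the bijection recursively by matching your decomposition against De Morgan's. Your checks hold up: internal pairs always have $c_{2i}\ge 1$, so stripping a terminal $(1,0)$ is safe for $n\ge 2$, and the case analysis on the last pair $\bigl((1,0)$, $(a,0)$ with $a\ge 2$, or $(a,b)$ with $b\ge 1\bigr)$ inverts $\alpha$ and $\beta$ exactly. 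What your approach buys is economy: once you have $a(n)=a(n-1)+a(n-2)$ with the right initial values, the identity $a(n)=f_n$ follows without ever invoking $C_\text{odd}(n)$, and the transported bijection is a bonus. What it gives up is the explicit structural correspondence between descending pairs and runs of ones followed by an odd part, which is precisely what the paper needs later: that closed-form map is the $h(c)<0$ case of the permutation $U$ in Theorem \ref{knegeven}, and it is the template for the generalizations to $A(n,k)$ with $k<0$ in Section \ref{s4}. (Amusingly, on small cases your recursively defined $\phi$ appears to coincide with the paper's explicit map, but your argument does not need that.)
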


\begin{proof}
Given $c = (c_1, \ldots, c_t) \in A(n)$, convert each pair of parts $(c_{2i-1},c_{2i})$ into $(1^{c_{2i-1}-c_{2i}-1}, 2c_{2i}+1)$, where superscripts denote repetition, a composition in $C_\text{odd}(c_{2i-1}+c_{2i})$.  Concatenating these gives an image in $C_\text{odd}(n)$.  Visually, this uses the ``bar graph'' representation of a composition with each part $c_i$ represented by a column of $c_i$ boxes.  The operation reads two bars from top to bottom where one less than the amount that $c_{2i-1}$ exceeds $c_{2i}$ become 1s and, if $c_{2i} > 0$, the rest contribute an odd number at least 3.  The possibility $c_{2i} = 0$ can only occur in the final pair, in which case the image composition ends in $c_{2i-1}$ parts 1.  Since $c_{2i-1} > c_{2i}$, the specified number of 1s is nonnegative.  See Figure \ref{admfig} for an example.

\begin{figure}[h]
\begin{center}
\setlength{\unitlength}{.5cm}
\begin{picture}(13,7)
\thicklines
\put(0,0){\line(0,1){6}}
\put(1,0){\line(0,1){6}}
\put(2,0){\line(0,1){4}}
\put(3,0){\line(0,1){4}}
\put(4,0){\line(0,1){3}}
\put(5,0){\line(0,1){3}}

\put(0,6){\line(1,0){1}}
\put(0,5){\line(1,0){1}}
\put(0,4){\line(1,0){1}}
\put(0,3){\line(1,0){1}}
\put(0,2){\line(1,0){5}}
\put(0,1){\line(1,0){5}}
\put(0,0){\line(1,0){5}}
\put(2,4){\line(1,0){1}}
\put(2,3){\line(1,0){3}}

\put(9,2){\line(0,1){4}}
\put(11,3){\line(0,1){1}}
\put(13,0){\line(0,1){3}}
{\linethickness{1mm}
\put(8,0){\line(0,1){6}}
\put(10,0){\line(0,1){4}}
\put(12,0){\line(0,1){3}}}

\put(8,6){\line(1,0){1}}
\put(8,5){\line(1,0){1}}
\put(8,4){\line(1,0){1}}
\put(8,3){\line(1,0){1}}
\put(8,0){\line(1,0){5}}
\put(9,2){\line(1,0){1}}
\put(10,4){\line(1,0){1}}
\put(11,3){\line(1,0){2}}
\put(12,2){\line(1,0){1}}
\put(12,1){\line(1,0){1}}

\put(8.3,5.3){1}
\put(8.3,4.3){1}
\put(8.3,3.3){1}
\put(8.8,1.3){5}
\put(10.8,2.3){7}
\put(12.3,2.3){1}
\put(12.3,1.3){1}
\put(12.3,0.3){1}

\end{picture}
\end{center}
\caption{$(6,2,4,3,3) \in A(18)$ corresponds to $(1,1,1,5,7,1,1,1) \in C_\text{odd}(18)$.} \label{admfig}
\end{figure}
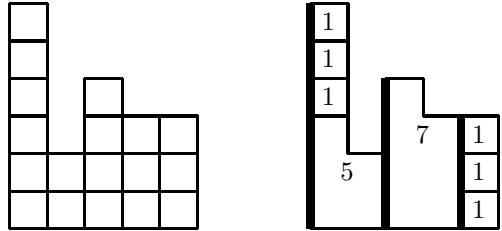

For the reverse map, a composition in $C_\text{odd}(n)$ can be broken into runs of the form $(1^a, 2b+1)$ for nonnegative integers $a$ and $b$ (where $b=0$ can only occur at the end in the case that the last part is 1).  The subsequence $(1^a, 2b+1)$ corresponds to the 2-part composition $(a+b+1,b) \in A(a+2b+1)$ (clearly $a+b+1>b$).  Concatenating the pairs gives a composition in $A(n)$.

It is clear that the two maps are inverses, establishing the bijection.
\end{proof}

See Table \ref{admex} for an example of the correspondence.

\begin{table}[h]
\caption{The correspondence between $A(6)$ and $C_\text{odd}(6)$ from Theorem \ref{adm}.} 
\centering
\begin{tabular}{rcl}
$A(6)$ & $\mapsto$ & $C_\text{odd}(6)$ \\ \hline
6 & & $1^6$ \\
51 & & $1^33$ \\
42 & & 15 \\
411 & & 1131 \\
321 & & 51 \\
312 & & 1311 \\
213 & & $31^3$ \\
2121 & & 33
\end{tabular}
\label{admex}
\end{table}

By De Morgan's $c_\text{odd}(n) = f_n$ result, we have shown again that $a(n) = f_n$.

In relation to the bijection between $A(n)$ and compositions with parts restricted to parts 1 and 2 given in \cite[Thm. 2.3]{ht22}, the bijection here sends all parts 2 and a part 1 to an odd part at least 3.  

The connection to De Morgan's restricted compositions counted by the Fibonacci numbers is not just a nice complement to the result about compositions restricted to parts 1 and 2, it is the base case of the family of bijections for a generalization of Arndt compositions defined in Section \ref{s4}.

\section{Two permutations of compositions}

In this section, we establish two permutations of $C(n)$, i.e., bijections from the set of compositions of $n$ to itself.  These will be used in the next section and may be of independent interest.  Also, we characterize and enumerate the compositions fixed by each permutation.

Both of the permutations connect pairs of parts and sequences of the form $(1^j,\ell)$ with $j \ge 0$ and $\ell \ge 2$, that is, a run of ones followed by a larger part (with the possible exception of a terminal run of ones).

\begin{thm} \label{knegeven}
Given a composition $c \in C(n)$, determine $U(c)$ by the pairwise map
\[ (c_{2i-1},c_{2i}) \mapsto \begin{cases} (1^{c_{2i-1}-c_{2i}-1},2c_{2i}+1) & \text{if $c_{2i}-c_{2i-1} <0$,} \\ (1^{c_{2i-1}+c_{2i}-2k},2k) & \text{if $2k-2 \le c_{2i}-c_{2i-1}  < 2k$ for $k \ge 1$.}\end{cases} \]
The resulting map $U$ is a permutation of $C(n)$.
\end{thm}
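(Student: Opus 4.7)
My plan is to exhibit an explicit two-sided inverse for $U$, following the block-decomposition strategy of Theorem~\ref{adm}. The first step is to check that $U$ sends $C(n)$ to itself: the pairwise sum $c_{2i-1}+c_{2i}$ is preserved by direct algebra in both cases, and the exponent on the $1$s is nonnegative --- in Case~1 because $c_{2i-1}>c_{2i}$, and in Case~2 because the inequality $c_{2i}\ge c_{2i-1}+2k-2$ together with $c_{2i-1}\ge 1$ gives $c_{2i-1}+c_{2i}-2k\ge 2c_{2i-1}-2\ge 0$.

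Next I would read off the structural signature of the image. Each pair produces a block of the form $(1^a,\ell)$; in Case~1 the entry $\ell=2c_{2i}+1$ is odd, and in Case~2 the entry $\ell=2k$ is even, so the parity of $\ell$ records which branch of $U$ was taken. The value $\ell=1$ can arise only from a padded terminal pair $(c_{2i-1},0)$, in which case the block degenerates to a trailing run $1^{c_{2i-1}}$. Since every composition in $C(n)$ parses uniquely as a concatenation of blocks $(1^{a_j},\ell_j)$ with $\ell_j\ge 2$, optionally followed by a terminal run of $1$s, I can invert $U$ block-by-block: an odd $\ell_j=2b_j+1$ recovers the Case~1 preimage $(a_j+b_j+1,b_j)$ exactly as in Theorem~\ref{adm}, and a terminal $1^b$ recovers $(b,0)$ (whose trailing $0$ is then suppressed).

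The main obstacle is the even subcase, where a block $(1^{a_j},2k_j)$ could have come from a pair with $c_{2i}-c_{2i-1}=2k_j-2$ or from one with $c_{2i}-c_{2i-1}=2k_j-1$, and the image alone must distinguish these. The key observation is that $a_j=c_{2i-1}+c_{2i}-2k_j$ satisfies $a_j\equiv c_{2i-1}+c_{2i}\pmod 2$, and in Case~2 this parity agrees with the parity of $c_{2i}-c_{2i-1}$: so $a_j$ is even precisely when $c_{2i}-c_{2i-1}=2k_j-2$, forcing $(c_{2i-1},c_{2i})=(a_j/2+1,\,a_j/2+2k_j-1)$, while $a_j$ is odd precisely when $c_{2i}-c_{2i-1}=2k_j-1$, forcing $(c_{2i-1},c_{2i})=((a_j+1)/2,\,(a_j-1)/2+2k_j)$. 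Once this parity invariant is established, a routine verification that $U$ composed with the block-by-block formulas above (and vice versa) is the identity shows that $U$ is a bijection of the finite set $C(n)$, completing the proof.
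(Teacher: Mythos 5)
Your proposal is correct and follows essentially the same route as the paper's proof: an explicit block-by-block inverse built from the unique parsing of the image into runs $(1^{a},\ell)$ with $\ell\ge 2$ plus a possible terminal run of ones. Your parity observation on $a_j$ is exactly what underlies the paper's case split of the inverse into $j=2k$ versus $j=2k-1$ when $\ell$ is even (you simply state the reason more explicitly than the paper does), and your recovered preimages agree with the paper's formulas.
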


See Table \ref{1perm} for examples of $U$.

\begin{table}[h]
\caption{The permutation $U$ of Theorem \ref{knegeven} on $C(5)$ with compositions organized by the $h$ values of Definition \ref{hdef}.}  
\centering
\begin{tabular}{c|c|c}
$h(c) < 0$ & $0 \le h(c) < 2$ & $2 \le h(c) < 4$ \\ \hline
$5 \mapsto 1^5$ & $23 \mapsto 1^32$ & $14 \mapsto 14$ \\
$41 \mapsto 113$ & $221 \mapsto 1121$ & $131 \mapsto 41$ \\
$32 \mapsto 5$ & $2111 \mapsto 32$ \\
$311 \mapsto 131$ & $122 \mapsto 1211$ \\
$212 \mapsto 311$ & $1211 \mapsto 122$ \\
& $113 \mapsto 21^3$ \\
& $1121 \mapsto 23$ \\
& $1^32 \mapsto 212$ \\
& $1^5 \mapsto 221$
\end{tabular}
\label{1perm}
\end{table}

\begin{proof}
Note that a final pair $(c_{2i-1},0)$ maps to $c_{2i-1}$ ones.

Since 
\[c_{2i-1}+c_{2i} = (c_{2i-1}-c_{2i}-1) + 2c_{2i}+1 = (c_{2i-1}+c_{2i}-2k)+2k,\]
clearly $U(c) \in C(n)$.  To show that the map is injective, suppose $c, d \in C(n)$.  If $c \ne d$, then the two compositions must differ in at least one pair of parts and the images of those pairs are distinct.

For the reverse map, partition a composition into subsequences of the form $(1^j,\ell)$ with $j \ge 0$ and $\ell \ge 2$, i.e., parts greater than one each grouped with any preceding run of ones.  There can also be a terminal run of ones, that is, $\ell = 1$ is allowed at the end and a final run $1^j$ is treated as $(1^{j-1},1)$.  The reverse map is determined by
\[ (1^j,\ell) \mapsto \begin{cases} 
 (j+m,m-1) & \text{if $\ell = 2m-1$}, \\
(k+1, k + 2m - 1) & \text{if $j = 2k$ and $\ell = 2m$}, \\
(k, k + 2m - 1) & \text{if $j=2k-1$ and $\ell=2m$}. \end{cases} \]
It is direct to check that $U$ sends $(j+m, m-1)$, since $j+m>m-1$, to $(1^{j+m-m+1-1}, 2(m-1) + 1) = (1^j, 2m-1)$, etc. \end{proof}

Our applications of the permutations depend on the following statistic.

\begin{definition} \label{hdef}
Given a composition $c = (c_1, \ldots, c_{2u})$, let 
\[ h(c) = \max_{1 \le i \le u} (c_{2i} - c_{2i-1}).\]
This is the greatest signed pairwise difference of $c$, the greatest increase from an odd index part to its successor.
\end{definition}

Table \ref{1perm} illustrating the permutation $U$ also gives several examples of the $h$ statistic.  Notice that if $h(c) < 0$, i.e., if all pairs satisfy $c_{2i-1} > c_{2i}$, then $U(c)$ is a composition with only odd parts.  If $h(c) = 2j-2$ or $2j-1$ for $j \ge 1$, then $U(c)$ is a composition with parts odd or from the set $\{2, 4, \ldots, 2j\}$.  

As a permutation, $U$ partitions the compositions $C(n)$ into disjoint cycles.  In a first step towards understanding the resulting cycle structure, we characterize and enumerate the 1-cycles/fixed points $C^U\!(n)$ of $C(n)$ under $U$.  

\begin{prop} \label{UFP}
For the map $U$ described in Theorem \ref{knegeven}, the fixed points $C^U\!(n)$ are the compositions $c = (c_1, \ldots, c_t) \in C(n)$ with the parts $c_{2i-1} = 1$ and parts $c_{2i}$ even for every $i$.  
The count $c^U\!(n)$ satisfies the recurrence $c^U\!(n) = c^U\!(n-2) + c^U\!(n-3)$ with initial values $c^U\!(1) = 1$, $c^U\!(2) = 0$, $c^U\!(3)=1$. 
\end{prop}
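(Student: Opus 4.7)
The plan has two parts: (i) characterize the fixed points of $U$ by analyzing the image of the leading pair, and (ii) derive the recurrence from a block decomposition of those fixed points.

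For (i), I will suppose $c = (c_1, \ldots, c_{2u})$ is a fixed point and examine the image of the leading pair $(c_1, c_2)$ under $U$, which must form the initial segment of $U(c) = c$. This image has length $c_1 - c_2$ in Case 1 and $c_1 + c_2 - 2k + 1$ in Case 2. An image of length greater than $2$ begins with two or more $1$s, forcing $c_1 = 1$; this contradicts the Case 1 constraint $c_1 \ge c_2 + 3$, and in Case 2 the inequalities $c_1 + c_2 > 2k + 1$ and $c_2 - c_1 < 2k$ combine to give the impossibility $c_2 > 2k$ together with $c_2 \le 2k$. An image of length $1$ arises only from the terminal padding $(c_1, c_2) = (1, 0)$, corresponding to $c = (1)$. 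An image of length $2$ is impossible in Case 1 (which would give $c_2 = -1$), and in Case 2 forces $(c_1, c_2) = (1, 2k)$ with $k \ge 1$. Thus the leading pair is either terminal padding or has $c_1 = 1$ and $c_2$ an even integer $\ge 2$; removing it and inducting on the number of pairs yields the claimed characterization.

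For (ii), each fixed point decomposes uniquely as a concatenation of blocks $(1, e)$ with $e$ a positive even integer (each of total size $1 + e \in \{3, 5, 7, \ldots\}$), optionally followed by a terminal block $(1)$ of size $1$. This gives the generating function
\[ F(x) = \frac{1 + x}{1 - \sum_{k \ge 1} x^{2k+1}} - 1 = \frac{x}{1 - x^2 - x^3}, \]
so $F(x)(1 - x^2 - x^3) = x$ immediately yields $c^U(n) = c^U(n-2) + c^U(n-3)$ for $n \ge 2$. A direct bijective alternative partitions $C^U(n)$ according to whether $c$ ends with an even part (bijecting with $C^U(n-2)$ by incrementing the last even part by $2$, or by appending $2$ after a terminal $1$) or with the part $1$ (bijecting with $C^U(n-3)$ by performing the same operation and then appending a final $1$). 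The initial values $c^U(1) = 1$, $c^U(2) = 0$, and $c^U(3) = 1$ follow from direct enumeration.

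The main difficulty lies in part (i): a priori, a fixed point of $U$ need not decompose pair-by-pair into individually fixed pairs, since pair images of differing lengths might still concatenate to give $c$. The length argument above is what forces each non-terminal pair to have a length-$2$ image and hence to be itself a fixed pair, after which the alternating structure and the block decomposition follow cleanly.
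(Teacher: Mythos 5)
Your proof is correct, and each half merits a brief comparison with the paper's. For the characterization, the paper argues pair by pair that a fixed pair must be $(1,2k)$ or the terminal $(1,0)$, essentially taking for granted that a fixed composition decomposes into individually fixed pairs; your prefix-length analysis of the leading pair followed by induction is the same idea made airtight against exactly the subtlety you flag (pair images of differing lengths reassembling into $c$). The only compressed step is the length-one case: you should say explicitly that sum preservation makes the single image part equal to $c_1+c_2$, so matching it against $c_1$ forces $c_2=0$ and hence the terminal padding. For the recurrence, the paper builds an explicit bijection $C^U\!(n)\cong C^U\!(n-2)\cup C^U\!(n-3)$ — append $1,2$ or $2,1$ to a composition of $n-3$, or add $2$ to the last nonzero even-indexed part of a composition of $n-2$, the two images distinguished by whether that part equals $2$ or is at least $4$ — which is essentially your ``direct bijective alternative'' restated in terms of the final part. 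Your primary route via the block decomposition and the generating function $x/(1-x^2-x^3)$ is a genuinely different and shorter derivation once the characterization is established; it buys economy and makes the Padovan structure immediate, while the paper's bijection buys a combinatorially explicit correspondence. Both are valid, and your computation of the rational function checks out against the listed values.
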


See Table \ref{Ufixed} for the $C^U\!(n)$ compositions through $n = 10$.  The sequence $c^U\!(n)$ is known as the Padovan numbers \cite[A000931]{o}.

\begin{table}[h]
\centering
\caption{The compositions in $C(n)$ fixed by the permutation $U$ and their counts for small $n$.} 
\begin{tabular}{r|l|l}
$n$ & $C^U\!(n)$ & $c^U\!(n)$ \\ \hline
1 & 1 & 1\\
2 & $\varnothing$ & 0\\
3 & 12 & 1\\
4 & 121 & 1\\
5 & 14 & 1\\
6 & 141, 1212 & 2\\
7 & 16, 12121 & 2\\
8 & 161, 1412, 1214 & 3 \\
9 & 18, 14121, 12141, 121212 & 4\\
10 & 181, 1612, 1414, 1216, 1212121 & 5
\end{tabular}
\label{Ufixed}
\end{table}

\begin{proof}
Examining the definition of $U$, a pair of parts $(c_{2i-1},c_{2i})$ fixed by the permutation must have $c_{2i-1} = 1$.  In the first case of the definition of $U$, to have a single part 1 start the image requires $c_{2i-1} = c_{2i} + 2$, a contradiction.  In the second case of the definition of $U$, having a single part 1 in the image requires $c_{2i-1}+c_{2i} = 2k + 1$.  Thus a composition in $C^U\!(n)$ consists of pairs $(1, 2k)$ where a final odd-indexed part 1 is allowed (the case $c_{2i}=0$).

For the enumeration result, we establish a bijection $C^U\!(n) \cong C^U\!(n-2) \cup C^U\!(n-3)$.

Given a $U$-fixed composition of $n-3$, add two parts at the end of the composition: $1,2$ if the original length is even, $2,1$ if the original length is odd.  The resulting composition of $n$ has an additional even-indexed part 2 and an additional odd-indexed part 1, so it is in $C^U\!(n)$.  Given a $U$-fixed composition of $n-2$, increase the last nonzero even-indexed part by 2.  This gives a composition of $n$ that maintains the necessary structure, so it is also in $C^U\!(n)$.  The two sets are disjoint considering the last nonzero even part: it is 2 for the compositions coming from $C^U\!(n-3)$, while it is at least 4 for the compositions coming from $C^U\!(n-2)$.

For the reverse map, given a $U$-fixed composition of $n$, if the last nonzero even part is at least 4, decrease it by 2 to make a composition in $C^U\!(n-2)$; if the last nonzero even part is 2, remove it and the last 1 (whether before or after the 2) to make a composition in $C^U\!(n-3)$.  

It is clear that the two maps are inverses, establishing the bijection.  The initial values follow from Table \ref{Ufixed}.
\end{proof}

See Table \ref{Uex} for an example of the bijection.

\begin{table}[h]
\caption{The correspondence between $C^U\!(8) \cup C^U\!(7)$ and $C^U\!(10)$ from Proposition \ref{UFP}.} 
\centering
\begin{tabular}{rcl}
$C^U\!(8) \cup C^U\!(7)$ & $\mapsto$ & $C^U\!(10)$ \\ \hline
161 & & 181 \\
1412 & & 1414 \\
1214 & & 1216 \\ \cline{1-1}
16 & & 1612 \\
12121 & & 1212121 
\end{tabular}
\label{Uex}
\end{table}

The second permutation is very similar to $U$, switching the parity of the second part of each pair.

\begin{thm} \label{knegodd}
Given a composition $c \in C(n)$, determine $V(c)$ by the pairwise map
\[ (c_{2i-1},c_{2i}) \mapsto \begin{cases} (1^{c_{2i-1}-c_{2i}},2c_{2i}) & \text{if $c_{2i}-c_{2i-1} < 1$,} \\ (1^{c_{2i-1}+c_{2i}-2k-1},2k+1) & \text{if $2k-1 \le c_{2i}-c_{2i-1} < 2k+1$ for $k \ge 1$.}\end{cases} \]
The resulting map $V$ is a permutation of $C(n)$.
\end{thm}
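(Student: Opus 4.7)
The plan is to mirror the proof of Theorem~\ref{knegeven}. First, I would verify that $V(c) \in C(n)$: the sum of each image pair equals the sum of the original pair, since $(c_{2i-1}-c_{2i})+2c_{2i} = c_{2i-1}+c_{2i}$ in case 1 and $(c_{2i-1}+c_{2i}-2k-1)+(2k+1) = c_{2i-1}+c_{2i}$ in case 2. The exponents of 1 are nonnegative: case 1 is triggered exactly when $c_{2i-1} \ge c_{2i}$, while in case 2 we have $c_{2i} \ge c_{2i-1}+2k-1$ and $c_{2i-1} \ge 1$, giving $c_{2i-1}+c_{2i} \ge 2k+1$. Injectivity then follows exactly as for $U$: two distinct compositions must differ in at least one pair of parts, and the pairwise rule is clearly injective on pairs.

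For surjectivity, I would construct the inverse by partitioning an arbitrary composition in $C(n)$ into blocks of the form $(1^j,\ell)$ with $j\ge 0$ and $\ell\ge 2$, together with any terminal run of ones, treated the same way as in the proof of Theorem~\ref{knegeven}. The reverse rule should read
\[ (1^j, \ell) \mapsto \begin{cases}
(j+m,\ m) & \text{if } \ell = 2m, \\
(k+1,\ k+2m) & \text{if } j = 2k \text{ and } \ell = 2m+1, \\
(k,\ k+2m) & \text{if } j = 2k-1 \text{ and } \ell = 2m+1.
\end{cases} \]
An even $\ell$ retrieves the case 1 preimage unambiguously, while an odd $\ell$ comes from case 2 with $k=m$; the parity of $j$ then distinguishes whether the underlying signed difference $c_{2i}-c_{2i-1}$ was the odd value $2m-1$ (forcing $j$ even) or the even value $2m$ (forcing $j$ odd).

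The main obstacle is precisely this parity-based split for odd $\ell$: the window $2k-1 \le c_{2i}-c_{2i-1} < 2k+1$ collapses two distinct signed differences into a single image parity, so I must confirm that the length of the preceding run of ones always pins down which one occurred. This is analogous to the case split on $j=2k$ versus $j=2k-1$ appearing in the reverse map for $U$. Once the bookkeeping is set up correctly, a one-line check in the style of the sample verification at the end of the proof of Theorem~\ref{knegeven} confirms that the forward and reverse maps are mutually inverse, establishing $V$ as a permutation of $C(n)$.
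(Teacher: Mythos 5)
Your proposal is correct and essentially identical to the paper's own proof: the same pair-sum and injectivity argument, and the same block-by-block inverse (your rules $(k+1,k+2m)$ and $(k,k+2m)$ for $\ell=2m+1$ agree with the paper's $(k+1,k+2m-2)$ and $(k,k+2m-2)$ for $\ell=2m-1$ after reindexing $m\mapsto m+1$). One small caveat, which the paper's own write-up shares: a terminal run of ones cannot be treated ``the same way as for $U$,'' since writing $1^j$ as $(1^{j-1},1)$ and applying the odd-$\ell$ rule yields $(k+1,k)$ rather than the correct preimage $(j,0)$; for $V$ a terminal $1^j$ should instead be read as $(1^j,0)$, i.e., the case $\ell=2m$ with $m=0$.
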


See Table \ref{2perm} for examples of $V$.

\begin{table}[h]
\centering
\caption{The permutation $V$ on $C(5)$ with compositions organized by $h$ values.} 
\begin{tabular}{c|c|c}
$h(c) < 1$ & $1 \le h(c) < 3$ & $3 \le h(c) < 5$ \\ \hline
$5 \mapsto 1^5$ & $23 \mapsto 113$ & $14 \mapsto 5$ \\
$41 \mapsto 1^32$ & $131 \mapsto 131$ \\
$32 \mapsto 14$ & $122 \mapsto 311$ \\
$311 \mapsto 1121$ & $1211 \mapsto 32$ \\
$221 \mapsto 41$ & $1112 \mapsto 23$\\
$212 \mapsto 1211$ & \\
$21^3 \mapsto 122$ &\\
$113 \mapsto 21^3$ &\\
$1121 \mapsto 212$ &\\
$1^5 \mapsto 221$ &
\end{tabular}
\label{2perm}
\end{table}

\begin{proof}
The proof is analogous to the proof of Theorem \ref{knegeven}.  Here, we just give the explicit reverse map, again in terms of subsequences of the form $(1^j,\ell)$ with $j \ge 0$ and $\ell \ge 2$ where a terminal $1^j$ is treated as $(1^{j-1},1)$.  Applying
\[ (1^j,\ell) \mapsto \begin{cases} (j+m,m) & \text{if $\ell = 2m$}, \\
(k+1, k + 2m - 2) & \text{if $j = 2k$ and $\ell = 2m-1$}, \\
(k, k + 2m - 2) & \text{if $j=2k-1$ and $\ell=2m-1$}. \end{cases}\]
to each subsequence of $c$ determines the reverse map.
\end{proof}

Notice that if $h(c) = 2j-1$ or $2j$ for $j \ge 1$, then the image is a composition with parts even or from the set $\{1, 3, \ldots, 2j+1\}$.  

We also characterize and enumerate $C^V\!(n)$, the $V$-fixed compositions of $n$.

\begin{prop} \label{VFP}
For the map $V$ described in Theorem \ref{knegodd}, the fixed points $C^V\!(n)$ are the compositions $c = (c_1, \ldots, c_t) \in C(n)$ with the parts $c_{2i-1} = 1$ and parts $c_{2i}$ odd and at least 3 for every $i$.
The count $c^V\!(n)$ satisfies the recurrence $c^V\!(n) = c^V\!(n-2)+ c^V\!(n-4)$ for $n \ge 5$ with initial values $c^V\!(1) = c^V\!(4) = 1$ and $c^V\!(2) = c^V\!(3) = 0$. 
\end{prop}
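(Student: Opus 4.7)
The approach closely mirrors the proof of Proposition \ref{UFP} with adjustments reflecting the parity shift in $V$'s image. First I would characterize the fixed pairs. In Case 1 of the definition of $V$, the image $(1^{c_{2i-1}-c_{2i}}, 2c_{2i})$ is a $2$-tuple equal to $(c_{2i-1}, c_{2i})$ only in the terminal form $(1,0)$, representing a final odd-indexed $1$. In Case 2, the image $(1^{c_{2i-1}+c_{2i}-2k-1}, 2k+1)$ equals $(c_{2i-1}, c_{2i})$ precisely when $c_{2i-1} = 1$ and $c_{2i} = 2k+1$ with $k \ge 1$; the range condition $2k-1 \le 2k < 2k+1$ is then automatic. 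Because every pair's image is a block of the form $(1^j, \ell)$ with $\ell \ge 2$ (or a terminal run of $1$s), the unique decomposition of $V(c)$ into such blocks respects pair boundaries, forcing a composition to be $V$-fixed precisely when each of its pairs is individually fixed. This yields the stated classification.

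Next I would establish a bijection $C^V\!(n) \cong C^V\!(n-2) \cup C^V\!(n-4)$ for $n \ge 5$. From $c \in C^V\!(n-4)$, append $(1,3)$ if the length of $c$ is even and $(3,1)$ if it is odd; in the latter case the original terminal $1$ becomes the odd-indexed entry of a new $(1,3)$ pair and the appended $1$ is the new terminal. Either way the image lies in $C^V\!(n)$ with last nonzero even-indexed part equal to $3$. From $c \in C^V\!(n-2)$, increase the last nonzero even-indexed part by $2$; since that part was odd and at least $3$, the result is odd and at least $5$, keeping the composition $V$-fixed. For $n \ge 3$ a $V$-fixed composition always has such a part (otherwise $c = (1)$ and $n = 1$), so this operation is well defined. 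Disjointness is immediate by inspecting the last nonzero even-indexed part: exactly $3$ for images from $C^V\!(n-4)$ and at least $5$ for those from $C^V\!(n-2)$.

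The inverse is forced by these distinguishing features. Given $c \in C^V\!(n)$ with $n \ge 5$, if the last nonzero even-indexed part is at least $5$, subtract $2$ from it to produce an element of $C^V\!(n-2)$; if it equals $3$, delete the $(1,3)$ or $(3,1)$ suffix according to the parity of the length to recover an element of $C^V\!(n-4)$. The initial values come from listing: $C^V\!(1) = \{(1)\}$, $C^V\!(4) = \{(1,3)\}$, and no composition whose odd-indexed parts are $1$ and whose even-indexed parts are odd and at least $3$ (with possibly a terminal $1$) sums to $2$ or $3$.

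The most delicate point will be the alignment of the $(1^j, \ell)$ block decomposition of $V(c)$ with the pair boundaries of $c$, which is what reduces the fixed-point condition to the pairwise one. Once that alignment is recorded, the bijective step and the enumeration follow the same template used in Proposition \ref{UFP}.
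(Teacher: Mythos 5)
Your proposal is correct and follows essentially the same route as the paper: the same pairwise characterization of the fixed points, and the same two combinatorial moves (append a $1,3$ block, or add $2$ to the last part that is at least $3$) to get the recurrence. The only difference is organizational---the paper first reduces odd $n$ to even $n$ by stripping the trailing $1$ and then proves $C^V\!(2j) \cong C^V\!(2j-2) \cup C^V\!(2j-4)$, whereas you run a single parity-uniform bijection keyed on the last nonzero even-indexed part; both versions work.
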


See Table \ref{Vfixed} for the $C^V\!(n)$ compositions through $n = 10$.  The sequence $c^V\!(n)$ from $n = 1$ is the Fibonacci numbers each occurring twice.  In other words, $c^V\!(2j) = c^V\!(2j+1) = f_{j-1}$ for $j \ge 1$.

\begin{table}[h]
\centering
\caption{The compositions in $C(n)$ fixed by the permutation $V$ and their counts for small $n$.} 
\begin{tabular}{r|l|l}
$n$ & $C^V\!(n)$ & $c^V\!(n)$ \\ \hline
1 & 1 & 1 \\
2 & $\varnothing$ & 0 \\
3 & $\varnothing$ & 0 \\
4 & 13 & 1\\
5 & 131 & 1\\
6 & 15  & 1\\
7 & 151 & 1\\
8 &  17, 1313 & 2\\
9 &  171, 13131 & 2\\
10 & 19, 1513, 1315 & 3 
\end{tabular}
\label{Vfixed}
\end{table}

\begin{proof}
Examining the definition of $V$, a pair of parts $(c_{2i-1},c_{2i})$ fixed by the permutation must have $c_{2i-1} = 1$.  In the first case of the definition of $V$, to have a single part 1 start the image requires $c_{2i-1} = c_{2i} + 1$ which is only true if $c_{2i} = 0$.  In the second case of the definition of $V$, having a single part 1 in the image requires $c_{2i-1}+c_{2i} = 2k + 2$ (recall $k \ge 1$).  Thus a composition in $C^V\!(n)$ consists of pairs $(1, 2k+1)$ where a final odd-indexed part 1 is allowed (from the first case of the definition when $c_{2i-1} = 1$ and $c_{2i}=0$).

For the enumeration result, we establish bijections $C^V\!(2j+1) \cong C^V\!(2j)$ and $C^V\!(2j) \cong C^V\!(2j-2) \cup C^V\!(2j-4)$.  Note that since all parts of a composition in $C^V\!(n)$ are odd, the compositions in $C^V\!(2j+1)$ have odd length while the compositions in $C^V\!(2j)$ have even length.

For the first bijection, a composition in $C^V\!(2j+1)$ must have last part 1; removing that gives a composition in $C^V\!(2j)$.  For the reverse map, given a composition in $C^V\!(2j)$, add a part 1 at the end.  It is clear that the maps are inverses, establishing the first bijection.

For the second bijection, given a $V$-fixed composition of $2j - 4$, add the parts 1, 3 at the end of the composition.  The resulting composition of $2j$ has an additional odd-indexed part 1 and an additional even-indexed part 3, so it is in $C^V\!(2j)$.  Given a $V$-fixed composition of $2j-2$, increase the last part by 2.  This gives a composition of $2j$ with the necessary structure and final part odd at least 5, so it is in $C^V\!(2j)$.  The two sets are disjoint considering the last part: it is 3 for compositions coming from $C^V\!(2j-4)$ while it is odd and at least 5 for compositions coming from $C^V\!(2j-2)$.

For the reverse map of the second bijection, given a $V$-fixed composition of $2j$, if the last part is at least 5, decrease it by 2 to make a composition in $C^V\!(2j-2)$; if the last part is 3, remove it and the last part 1 to make a composition in $C^V\!(2j-4)$.  It is clear that the maps are inverses, establishing the second bijection.

The initial values follow from Table \ref{Vfixed}.
\end{proof}

See Table \ref{Vex} for an example of the bijection.

\begin{table}[h]
\centering
\caption{The correspondence between two copies of $C^V\!(6)$ and $C^V\!(8)$ from Proposition \ref{VFP}.} 
\begin{tabular}{rcl}
$2C^V\!(6)$ & $\mapsto$ & $C^V\!(8)$ \\ \hline
15 & & 17 \\
1311 & & 1313 \\
$1^33$ & & $1^35$ \\
$1^6$ & & $1^53$ \\ \cline{1-1}
15 & & 1511 \\
1311 & & $131^4$ \\
$1^33$ & & $1^3311$ \\
$1^6$ & & $1^8$
\end{tabular}
\label{Vex}
\end{table}

\section{Generalizing Arndt's compositions} \label{s4}

Arndt's compositions $A(n)$ require a descent from each $c_{2i-1}$ to $c_{2i}$ for each $i$.  Repeating \cite[Def. 3.1]{ht22}, this condition can be generalized as follows.

\begin{definition} \label{ga}
Given an integer $k$, let $A(n,k) \subset C(n)$ be the compositions such that $c_{2i-1} > c_{2i}+k$ for each positive integer $i$.  If the number of parts is odd, then the final inequality is vacuously true.
\end{definition}

The Arndt compositions $A(n)$ are the same as $A(n,0)$.  See Table \ref{counts} for values of $a(n,k) = |A(n,k)|$ for small values.  The authors considered the cases $k > 0$ in \cite{ht22} where a steeper descent is required.  Here we focus on the cases $k < 0$ which allows for equality or a limited increase between each part $c_{2i-1}$ and $c_{2i}$.

\begin{table}[h]
\centering
\caption{Generalized Arndt compositions counts $a(n,k)$ for small $n$ and $k$.} 
\begin{tabular}{r|rrrrrrrrrr}
$k \backslash n$ & 1 & 2 & 3 & 4 & 5 & 6 & 7 & 8 & 9 & 10 \\ \hline
3 & 1 & 1 & 1 & 1 & 1 & 2 & 3 & 5 & 7 & 10 \\
2 & 1 & 1 & 1 & 1 & 2 & 3 & 5 & 7 & 10 & 14 \\
1 & 1 & 1 & 1 & 2 & 3 & 5 & 7 & 11 & 16 & 25 \\ \hline
0 & 1 & 1 & 2 & 3 & 5 & 8 & 13 & 21 & 34 & 55 \\ \hline
$-1$ & 1 & 2 & 3 & 6 & 10 & 19 & 33 & 61 & 108 & 197 \\
$-2$ & 1 & 2 & 4 & 7 & 14 & 26 & 50 & 95 & 181 & 345 \\
$-3$ & 1 & 2 & 4 & 8 & 15 & 30 & 58 & 114 & 222 & 435
\end{tabular}
\label{counts}
\end{table} 

As an additional motivation, the generalized Arndt compositions $A(n,k)$ for negative $k$ are related to the compositions studied by George Andrews in \cite{a81} that require $c_i \ge c_{i+1} - d$ for all $i$ and an integer parameter $d$.  These $d$-compositions were further studied by Viennot \cite{v87} using his ``heaps of pieces'' approach.
The generalized Arndt compositions are a pairwise version of this with $c_{2i-1} > c_{2i}+k$ (recall $k$ is negative) but no restriction between $c_{2i}$ and $c_{2i+1}$.

\begin{thm} \label{genArecur}
The number of generalized Arndt compositions satisfies the recurrence
\[ a(n,k) = \begin{cases}	a(n-1, k) + 2a(n-2, k) - a(n-2+k, k) & \text{if $k \le 0$}, \\
a(n-1,k) + a(n-2,k) - a(n-3,k) + a(n-3-k,k) & \text{if $k \ge 0$}. \end{cases} \]
\end{thm}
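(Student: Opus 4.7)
The case $k \ge 0$ was handled in \cite{ht22}, so my plan focuses on the new regime $k \le 0$. The approach is to derive a closed form for the ordinary generating function $A_k(x) = \sum_{n \ge 0} a(n,k)\,x^n$ (with the convention $a(0,k)=1$ for the empty composition) and read the recurrence off its denominator.

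The first step is a simple decomposition: a composition counted by $A(n,k)$ is either empty, a single part $(c_1)$ with $c_1 \ge 1$, or starts with a leading pair $(c_1,c_2)$ satisfying $c_1,c_2 \ge 1$ and $c_2 < c_1-k$, followed by an arbitrary element of $A(n-c_1-c_2,k)$. Writing $P_k(x) = \sum_{c_1,c_2 \ge 1,\, c_2 < c_1 - k} x^{c_1+c_2}$ for the generating function of such a leading pair, this decomposition yields the functional equation $A_k(x) = \frac{1}{1-x} + P_k(x)\, A_k(x)$, so $A_k(x) = \frac{1}{(1-x)(1-P_k(x))}$. To evaluate $P_k(x)$ for $k \le 0$ I would split the sum according to whether $c_2 \le -k$ (in which case the pair constraint $c_2 < c_1 - k$ is automatic from $c_1 \ge 1$) or $c_2 > -k$ (forcing $c_1 \ge c_2+k+1$), sum the resulting geometric series, and combine over the common denominator $(1-x)^2(1+x)$. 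The expected simplification produces
\[ A_k(x) = \frac{1 - x^2}{1 - x - 2x^2 + x^{2-k}}, \]
equivalently $(1 - x - 2x^2 + x^{2-k})\,A_k(x) = 1 - x^2$. Extracting the coefficient of $x^n$ for $n \ge 3$, where the right-hand side no longer contributes, gives exactly the claimed recurrence $a(n,k) = a(n-1,k) + 2a(n-2,k) - a(n-2+k,k)$; a direct check of the initial values $a(0,k), a(1,k), a(2,k)$ against Table~\ref{counts} settles the boundary.

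The main obstacle is the bookkeeping in the case split defining $P_k(x)$ together with the subsequent algebraic simplification; a sanity check against $k=-1$ or $k=-2$ in Table~\ref{counts} is built into the plan. A more combinatorial alternative in the spirit of the rest of the paper is to first apply the permutation $U$ of Theorem~\ref{knegeven}: for $k = -2j$ the map $U$ bijects $A(n,-2j)$ with the set of compositions of $n$ whose parts are odd or lie in $\{2,4,\ldots,2j\}$, and the recurrence then follows cleanly by a first-part analysis on this restricted-part set (decrementing the leading part by $2$ is a near-bijection with $A(n-2,-2j)$ whose sole failure would produce the forbidden even part $2j+2$, giving the subtracted $a(n-2j-2,-2j)$ term). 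The odd case $k = -2j+1$ introduces an extra parity condition on runs of ones preceding a part $2j$ in the $U$-image, and is handled uniformly by the generating-function route above.
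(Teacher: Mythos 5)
Your generating-function argument is correct, but it takes a genuinely different route from the paper, which proves the $k<0$ recurrence purely bijectively: it splits $A(n,k)$ by parity of length and constructs explicit bijections $A^o(n,k) \cong A(n-1,k)$ and $A^e(n,k) \cup A(n-2+k,k) \cong 2A(n-2,k)$ by adjusting the last one or two parts. Your decomposition into a leading constrained pair followed by an arbitrary element of $A(n-c_1-c_2,k)$ is valid (removing the first two parts shifts the pairing by exactly one pair), and I have checked the algebra: with $m=-k$ the pair series is $P_k(x) = \frac{x^2+x^3-x^{m+2}}{(1-x)^2(1+x)}$, which indeed yields $A_k(x) = \frac{1-x^2}{1-x-2x^2+x^{2-k}}$ and hence the recurrence for $n \ge 3$ (with $a(0,k)=1$ and $a(j,k)=0$ for $j<0$); it even recovers the $k=0$ Fibonacci case as a degenerate instance. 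What your approach buys is brevity and uniformity — one computation covers all $k\le 0$, and the same $P_k$ computation for $k>0$ would recover the other branch of the theorem as well — at the cost of the refined information the paper extracts, namely the separate identities $a^o(n,k)=a(n-1,k)$ and $a^e(n,k)=2a(n-2,k)-a(n-2+k,k)$ and the explicit correspondences behind them. Your sketched combinatorial alternative via the permutation $U$ is also sound in spirit but inverts the paper's logical order (the paper derives Theorems \ref{oddsmalleven} and \ref{evensmallodd} after this recurrence, then deduces the restricted-part recurrences as a corollary), and as you note it leaves the odd-$k$ case to the generating function anyway; the only genuine gap in the write-up is that the ``first-part analysis'' on $C_\text{odd}^{2j}(n)$ is asserted rather than carried out, but nothing in your primary argument depends on it.
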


\begin{proof}
The $k = 0$ case is Theorem \ref{adm} 
above (see also Theorems 2.1 and 2.3 of \cite{ht22}).  The $k > 0$ case was established in \cite[Thm.\ 3.2]{ht22}.  Here, suppose $k < 0$.

Write $A^o(n,k)$ for the compositions of $A(n,k)$ with odd length and $A^e(n,k)$ for those with even length.  We establish two bijections,
\begin{gather*} A^o(n,k) \cong A(n-1,k), \\ A^e(n,k) \cup A(n-2+k,k) \cong 2A(n-2,k) \end{gather*}
where the coefficient 2 indicates two copies of the set $A(n-2,k)$.  This gives a slightly stronger result,
\begin{gather*} a^o(n,k) = a(n-1,k), \\ a^e(n,k) = 2a(n-2,k) - a(n-2+k,k) \end{gather*}
from which the claim follows, since $a(n,k) = a^o(n,k) + a^e(n,k)$.

For the first bijection, given $c = (c_1, \ldots, c_t) \in A^o(n,k)$, let
\[c \mapsto (c_1, \ldots, c_{t-1}, c_t - 1)\]
where, if $c_t = 1$, the resulting 0 is omitted in the image.  Since decreasing (or deleting) the terminal odd indexed part does not affect the pairwise difference conditions, this gives a composition in $A(n-1,k)$.  For the reverse map, given $c = (c_1, \ldots, c_t) \in A(n-1,k)$, let 
\[c \mapsto \begin{cases} (c_1, \ldots, c_t+1) & \text{if $t$ odd}, \\ (c_1, \ldots, c_t,1) & \text{if $t$ is even}, \end{cases} \]
in other words, adding 1 in the last possible odd position.  Note that in either case, the image has odd length.  Again, the terminal odd indexed part does not affect the pairwise difference conditions, so the image is in $A^o(n,k)$.  These maps are clearly inverses.

See Table \ref{bij1} for an example of this first bijection.

\begin{table}[h]
\centering
\caption{An example of the first bijection in the proof of Theorem \ref{genArecur} between $A^o(6,-1)$ and $A(5,-1)$.} 
\begin{tabular}{rcl}
$A^o(6,-1)$ & $\mapsto$ & $A(5,-1)$ \\ \hline
6 && 5 \\
411 && 41 \\
321 && 32 \\
312 && 311 \\
$2^3$ && 221 \\
213 && 212 \\
$21^4$ && $21^3$ \\
114 && 113 \\
11211 && 1121 \\
$1^42$ && $1^5$
\end{tabular}
\label{bij1}
\end{table}

For the second bijection, suppose $c = (c_1, \ldots, c_t) \in A^e(n,k)$.  Let 
\[ c \mapsto (c_1, \ldots,c_{t-1}-1, c_t-1)\]
where any parts 0 (from $c_t = 1$ or $c_{t-1}=1$) are omitted.  This gives compositions in $A(n-2,k)$ since decreasing the last two parts by 1 maintains the pairwise difference condition or, when $c_{t-1}=1$, gives an odd length composition with last part $c_t-1$.  More specifically, the compositions with $c_t = 1$ give one set of $A(n-2,k)$ compositions (note that the images can have even or odd length).  The compositions with $c_t > 1$ give some compositions in the second $A(n-2,k)$.  Again, the images can have odd or even length depending on $c_{t-1}$.  In the case of an odd length image, we have $c_{t-1} = 1$ and, by the pairwise difference condition, $c_t < -k + 1$, so that the last part in the image is less than $-k$.  

Continuing the second bijection, suppose $c = (c_1, \ldots, c_t) \in A(n-2+k,k)$.  Let
\[c \mapsto \begin{cases} (c_1, \ldots, c_t-k) & \text{if $t$ odd}, \\ (c_1, \ldots, c_t,-k) & \text{if $t$ is even}, \end{cases} \]
in other words, adding $-k$ in the last possible odd position.  This does not affect the pairwise difference conditions, so the images are in $A(n-2,k)$, have odd length, and last part at least $-k$.

The only case to verify that this is an injection are the odd length compositions of $A(n-2,k)$.  As noted, those with last part less than $-k$ come from $A^e(n,k)$ while those with last part at least $-k$ come from $A(n-2+k,k)$.

We conclude the proof by giving the reverse map for the second bijection.  Suppose $c = (c_1, \ldots, c_t)$ in the first copy of $A(n-2,k)$.  Let
\[ c \mapsto \begin{cases} (c_1,\ldots, c_t+1,1) & \text{if $t$ odd}, \\ (c_1,\ldots, c_{t-1}+1, c_t +1) & \text{if $t$ even}, \end{cases} \]
in other words, make an even length composition of $n$ by adding 1 in the final odd indexed and even indexed parts of the pre-image.  The image is in $A^e(n,k)$: for $t$ odd, the image has penultimate part greater than 1 and last part 1, which certainly satisfies the pairwise difference condition; for $t$ even, adding 1 to the last two parts maintains the pairwise difference condition.  This gives the $A^e(n,k)$ compositions with penultimate parts greater than 1.

Continuing the reverse map for the second bijection, suppose $c = (c_1, \ldots, c_t)$ in the second copy of $A(n-2,k)$.  Let
\[ c \mapsto \begin{cases} (c_1,\ldots, c_{t-1},1,c_t+1) & \text{if $t$ odd and $c_t < -k$}, \\ 
(c_1,\ldots, c_{t-1}, c_t+k) & \text{if $t$ odd and $c_t \ge -k$},\\
(c_1,\ldots, c_t, 1, 1) & \text{if $t$ even}. \end{cases} \]
The $t$ odd and $c_t < -k$ case gives an even length composition of $n$ that satisfies the pairwise difference condition since $1 = -k+1+k > c_t + 1 + k$.  These are the compositions of $A^e(n,k)$ with penultimate part 1 and last part greater than 1.  The $t$ even case gives the compositions in $A^e(n,k)$ whose last two parts are 1 (which certainly satisfies the pairwise difference condition).  With the images from the first copy of $A(n-2,k)$ above, these give all of $A^e(n,k)$.  The $t$ odd and $c_t \ge -k$ case gives compositions of $n-2+k$ of both odd and even length (the latter when $c_t = -k$) which are in $A(n-2+k,k)$ since the first $t-1$ parts satisfy the pairwise difference condition.

As noted, the reverse map is injective by looking at the last two parts of the images: penultimate part greater than 1 from the first copy of $A(n-2,k)$, penultimate part 1 from some of the second copy of $A(n-2,k)$ with the remainder of the second copy going to $A(n-2+k,k)$.
\end{proof}

See Table \ref{bij2} for an example of this second bijection.

\begin{table}[h]
\centering
\caption{An example of the second bijection in the proof of Theorem \ref{genArecur}, from $A^e(6,-1) \cup A(3,-1)$ on the left-hand side, from $2A(4,-1)$ on the right-hand side.} 
\begin{tabular}{rcl||rcl}
$A^e(6,-1) \cup A(3,-1)$ & $\mapsto$ & $2A(4,-1)$ & $2A(4,-1)$ & $\mapsto$ & $A^e(6,-1) \cup A(3,-1)$ \\ \hline
51 && 4 & 4 && 51 \\
42 && 31 & 31 && 42 \\
33 && 22 & 22 && 33 \\
$31^3$ && 31 & 211 && 2121 \\
1131 && 112 & 112 && 1131 \\
2211 && 22 & $1^4$ && 1122 \\ \cline{4-6}
2121 && 211 & 4 && 3 \\
1122 && $1^4$ & 31 && $31^3$ \\
$1^6$ && $1^4$ & 22 && 2211 \\ \cline{1-3}
3 && 4 & 211 && 21 \\
21 && 211 & 112 && $1^3$ \\
$1^3$ && 112 & $1^4$ && $1^6$
\end{tabular}
\label{bij2}
\end{table}

We conclude this work by establishing a bijection between the generalized Arndt compositions $A(n,k)$ for $k < 0$ and classes of compositions based on the parity of their parts defined next.

\begin{definition}
For a nonnegative integer $k$, let $C_\text{odd}^{2k}(n)$ be the compositions of $n$ with parts restricted to positive odd integers and the even integers $2, \ldots, 2k$.  Similarly, let $C_\text{even}^{2k+1}(n)$ be the compositions of $n$ with parts restricted to positive even integers and the odd integers $1, \ldots, 2k+1$.
\end{definition}

Note that $C_\text{odd}^0(n)$ are the odd part compositions considered by De Morgan.  Current OEIS occurrences for other sequences of these counts are $c_\text{even}^{1}(n)$ \cite[A028495]{o} and $c_\text{odd}^{2}(n)$ \cite[A052535]{o}.  For fixed $n$ and sufficiently large $k$, we have $C_\text{odd}^{2k}(n) = C_\text{even}^{2k+1}(n) = C(n)$, as all parts arising in $C(n)$ are allowed.

\begin{thm} \label{oddsmalleven}
For $k$ a negative even number, $a(n,k) = c_\text{odd}^{-k}(n)$.
\end{thm}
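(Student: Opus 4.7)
The plan is to show that the permutation $U$ of Theorem \ref{knegeven} restricts to a bijection between $A(n,k)$ and $C_\text{odd}^{-k}(n)$ when $k$ is a negative even integer. Write $k = -2m$ with $m \ge 1$ (the case $m = 0$ is Theorem \ref{adm}). The generalized Arndt condition $c_{2i-1} > c_{2i} + k$ becomes $c_{2i} - c_{2i-1} < 2m$, so, using the statistic of Definition \ref{hdef},
\[ A(n,-2m) = \{c \in C(n) : h(c) < 2m\}. \]

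Next I would track the parts of $U(c)$ pair by pair, directly from the definition. A pair with $c_{2i} - c_{2i-1} < 0$ maps to $(1^{c_{2i-1}-c_{2i}-1}, 2c_{2i}+1)$, contributing only odd parts, while a pair with $2j-2 \le c_{2i}-c_{2i-1} < 2j$ maps to $(1^{c_{2i-1}+c_{2i}-2j}, 2j)$, contributing ones together with a single even part of size exactly $2j$. Consequently, each even part of $U(c)$ equals $2j$ for some pair whose signed pairwise difference lies in $[2j-2, 2j)$, and in particular the largest even part appearing in $U(c)$ is the unique even integer $2j$ with $2j-2 \le h(c) < 2j$.

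Combining these observations, $U(c) \in C_\text{odd}^{2m}(n)$, that is, every even part of $U(c)$ is at most $2m$, if and only if every induced $2j$ satisfies $2j \le 2m$, which is equivalent to $h(c) < 2m$. Thus $U$ restricts to a bijection
\[ \{c \in C(n) : h(c) < 2m\} \;\xrightarrow{\sim}\; C_\text{odd}^{2m}(n), \]
and counting both sides yields $a(n,-2m) = c_\text{odd}^{2m}(n)$, as required.

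I anticipate no serious obstacle, since $U$ was constructed precisely so that the set of even parts in the image faithfully records the pairwise differences of the preimage. The only detail requiring care is verifying that the inverse formulas displayed in the proof of Theorem \ref{knegeven} indeed produce a pair $(c_{2i-1},c_{2i})$ whose signed difference $c_{2i}-c_{2i-1}$ lies in the interval $[2j-2, 2j)$ dictated by the even part $2j$ of the subsequence $(1^j,\ell)$; a direct inspection of the three cases of the reverse map confirms this and closes the loop.
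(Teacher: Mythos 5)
Your proposal is correct and follows essentially the same route as the paper: restricting the permutation $U$ of Theorem \ref{knegeven} to $A(n,-2m)$ via the observation that $A(n,-2m)$ is exactly the set of compositions with $h(c)<2m$, and noting that $U$ converts each pairwise difference into an even part recording that difference. Your pair-by-pair tracking of the even parts of $U(c)$ is just a slightly more explicit version of the paper's one-line appeal to ``the description of $U$.''
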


\begin{proof}
Given $k = -2j$ for a positive integer $j$, we show that the permutation $U$ of Theorem \ref{knegeven} restricted to $A(n,-2j)$ is a bijection with $C_\text{odd}^{2j}(n)$.

Comparing the Definition \ref{ga} of $A(n,k)$ and Definition \ref{hdef} of the $h$ statistic, a composition $c \in A(n,-2j)$ has $h(c) < 2j$.  By the description of $U$ in Theorem \ref{knegeven}, compositions with $h(c) < 2j$ are sent to compositions with even parts at most $2j$ or odd parts, i.e., compositions in $C_\text{odd}^{2j}(n)$.  The inverse of $U$ described in the proof of Theorem \ref{knegeven} sends a composition in $C_\text{odd}^{2j}(n)$ to a composition in $A(n,-2j)$, completing the bijection.
\end{proof}

Table \ref{1perm} provides examples for $n=5$.  The 5 compositions in the left-hand column show $A(5,0) \cong C_\text{odd}^{0}(5) = C_\text{odd}(5)$.  The union of the 14 compositions in the left-hand and middle columns show $A(5,-2) \cong C_\text{odd}^{2}(5)$.  The 16 total compositions in all three columns show $A(5,-4) \cong C_\text{odd}^{4}(5) = C(5)$.

\begin{thm} \label{evensmallodd}
For $k$ a negative odd number, $a(n,k) = c_\text{even}^{-k}(n)$.
\end{thm}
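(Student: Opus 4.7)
The plan is to mirror the proof of Theorem \ref{oddsmalleven} verbatim, trading the permutation $U$ for $V$ from Theorem \ref{knegodd}. Concretely, I would write $k = -(2j+1)$ with $j \ge 0$ a nonnegative integer and argue that the restriction of $V$ to $A(n,-(2j+1))$ gives a bijection onto $C_\text{even}^{2j+1}(n)$.

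The first step is a purely definitional translation. Comparing Definition \ref{ga} with Definition \ref{hdef}, the condition $c_{2i-1} > c_{2i} - (2j+1)$ for all $i$ is equivalent to $c_{2i} - c_{2i-1} < 2j+1$ for all $i$, i.e., $h(c) \le 2j$. So $A(n,-(2j+1))$ is exactly the set of compositions with $h(c) < 2j+1$.

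The second step is to invoke the already-recorded output structure of $V$. For $h(c) \ge 1$, the remark following Theorem \ref{knegodd} states that if $h(c) = 2j'-1$ or $2j'$ with $j' \ge 1$, then $V(c)$ has parts that are either even or odd and at most $2j'+1$. Iterating this over $1 \le j' \le j$, every $c$ with $1 \le h(c) \le 2j$ maps into $C_\text{even}^{2j+1}(n)$. For $h(c) < 1$, the first clause of $V$ outputs only parts equal to $1$ and nonnegative even parts $2c_{2i}$, which also lies in $C_\text{even}^{2j+1}(n)$ for any $j \ge 0$. Combining these cases shows $V(A(n,-(2j+1))) \subseteq C_\text{even}^{2j+1}(n)$.

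The third step is the reverse inclusion, which comes for free from the explicit inverse of $V$ given in the proof of Theorem \ref{knegodd}: reading off the three cases of that inverse, a composition built from subsequences $(1^j,\ell)$ with $\ell \in \{2,4,\ldots\} \cup \{1,3,\ldots,2j+1\}$ is sent back to a pairwise-assembled composition whose $h$ value is strictly less than $2j+1$, hence lies in $A(n,-(2j+1))$. Since $V$ is already a bijection of $C(n)$ onto itself, the restriction to matching $h$-ranges on each side is automatically a bijection, giving $a(n,-(2j+1)) = c_\text{even}^{2j+1}(n)$. I do not expect any real obstacle here; the only subtlety is that the ``$j \ge 1$'' note following Theorem \ref{knegodd} does not cover the base case $h(c) < 1$, which must be handled separately by reading the first clause of $V$ directly, and the edge case $c_{2i} = 0$ (which only appears in a terminal pair and produces a run of $1$s) should be acknowledged so the image legitimately sits in $C_\text{even}^{2j+1}(n)$.
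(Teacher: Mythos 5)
Your proof is correct and takes essentially the same route as the paper: restrict the permutation $V$ of Theorem \ref{knegodd} to the compositions with $h(c) < 2j+1$, use the recorded output structure of $V$ for the forward inclusion, and use the explicit inverse of $V$ for the reverse inclusion, with bijectivity inherited from $V$ being a permutation of $C(n)$. Your separate treatment of the base case $j=0$ (i.e., $k=-1$, where $h(c)<1$ and only the first clause of $V$ applies) is a small but genuine improvement, since the paper's one-line proof parametrizes $k=-2j-1$ with $j$ a \emph{positive} integer and thus glosses over exactly that case.
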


\begin{proof}
Analogous to the previous theorem, given $k = -2j-1$ for a positive integer $j$, the permutation $V$ of Theorem \ref{knegodd} restricted to $A(n,-2j-1)$ is a bijection with $C_\text{even}^{2j+1}(n)$.
\end{proof}

Table \ref{2perm} provides examples for $n=5$.  The 10 compositions in the left-hand column show $A(5,-1) \cong C_\text{even}^{1}(5)$.  The union of the 15 compositions in the left-hand and middle columns show $A(5,-3) \cong C_\text{even}^{3}(5)$.  The 16 total compositions in all three columns show $A(5,-5) \cong C_\text{even}^{5}(5) = C(5)$.

The connections established in these last two theorems show why $c_\text{odd}^{2k}(n)$ and $c_\text{even}^{2k+1}(n)$ satisfy the same recurrence relation, a fact that is not obvious from their definitions.

\begin{cor}
For $k \ge 1$,
\begin{gather*}
c_\text{even}^{2k+1}(n) = c_\text{even}^{2k+1}(n-1) + 2c_\text{even}^{2k+1}(n-2) - c_\text{even}^{2k+1}(n-2-2k-1),\\
c_\text{odd}^{2k}(n) = c_\text{odd}^{2k}(n-1) + 2c_\text{odd}^{2k}(n-2) - c_\text{odd}^{2k}(n-2-2k).
\end{gather*}
\end{cor}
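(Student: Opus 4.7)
The plan is to derive both recurrences as immediate consequences of Theorem \ref{genArecur} (in its $k<0$ case) combined with the enumerative identifications furnished by Theorems \ref{oddsmalleven} and \ref{evensmallodd}. All the combinatorial work has already been done in those results; what remains is a careful substitution of parameters, and I will be explicit about that because the symbol $k$ is overloaded between the corollary's statement (which records the parity cutoff) and Theorem \ref{genArecur} (whose $k$ is the descent shift and differs in sign).

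For the first identity, I would fix $k \ge 1$ and apply Theorem \ref{genArecur} with its parameter specialized to the negative odd integer $-(2k+1)$. This yields
\[ a(n,-(2k+1)) = a(n-1,-(2k+1)) + 2a(n-2,-(2k+1)) - a(n-2-(2k+1),-(2k+1)). \]
By Theorem \ref{evensmallodd}, every term $a(m,-(2k+1))$ equals $c_\text{even}^{2k+1}(m)$, and substitution produces the first stated recurrence with shift $n-2-(2k+1)$, exactly the argument $n-2-2k-1$ appearing in the corollary.

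For the second identity, I would analogously apply Theorem \ref{genArecur} with its parameter specialized to the negative even integer $-2k$, obtaining
\[ a(n,-2k) = a(n-1,-2k) + 2a(n-2,-2k) - a(n-2-2k,-2k). \]
Theorem \ref{oddsmalleven} then identifies each $a(m,-2k)$ with $c_\text{odd}^{2k}(m)$, translating the display into the second stated recurrence.

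There is no real obstacle: the corollary is purely a translation of the already-proved $k<0$ case of Theorem \ref{genArecur} through the bijections $U$ and $V$. The only care needed is bookkeeping around the sign change in the parameter and the fact that the recurrence is valid in the range of $n$ for which both $n-2-2k$ (resp.\ $n-2k-3$) are at least $1$; for smaller $n$ one would simply verify initial conditions against Table \ref{counts}. The value of stating the result as a corollary is to emphasize that the equality of recurrence form for $c_\text{odd}^{2k}$ and $c_\text{even}^{2k+1}$, which is not at all obvious from their part-restriction definitions, becomes transparent once both are reinterpreted as generalized Arndt compositions.
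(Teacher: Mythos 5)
Your proposal is correct and matches the paper's proof exactly: both specialize the $k\le 0$ case of Theorem \ref{genArecur} to the parameters $-(2k+1)$ and $-2k$ and then translate each count $a(m,\cdot)$ via Theorems \ref{evensmallodd} and \ref{oddsmalleven}. Your version is actually slightly more careful than the paper's two-sentence proof about which theorem pairs with which sequence, but the substance is identical.
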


\begin{proof}
Theorems \ref{oddsmalleven} and \ref{evensmallodd} connect $c_\text{odd}^{2k}(n)$ and $c_\text{even}^{2k+1}(n)$ to $a(n,-2k-1)$ and $a(n,-2k)$, respectively.  The recurrences follow from Theorem \ref{genArecur}.
\end{proof}

\end{document}